\documentclass[hidelinks,onefignum,onetabnum]{siamart251104}

\usepackage{multirow}
\usepackage{tikz-cd}
\usepackage{subcaption}
\usepackage{amssymb}
\usepackage{algorithm}
\usepackage{algpseudocode}
\usepackage{mydef}
\usepackage{amssymb}
\usepackage{amsmath}
\usepackage{bbm}
\usepackage{ulem}
\usepackage{todonotes}
\usepackage{xspace}

\newsiamremark{remark}{Remark}
\newsiamremark{hypothesis}{Hypothesis}
\crefname{hypothesis}{Hypothesis}{Hypotheses}
\newsiamthm{claim}{Claim}
\newsiamremark{fact}{Fact}
\crefname{fact}{Fact}{Facts}
\algrenewcommand\algorithmicrequire{\textbf{Input:}}
\algrenewcommand\algorithmicensure{\textbf{Output:}}

\usepackage{comment}

\headers{Positivity-Preserving Low-Rank Methods}{K. Kormann, M. Nazarov, J. Wen}

\title{Positivity-preserving dynamical low-rank methods for the Vlasov equation\thanks{Authors are listed alphabetically.
    \funding{This material is based upon work supported by the Swedish Research Council (VR) under grant numbers 2021-04620 and 2021-05095. K.K. is supported by the German Science Foundation DFG under project 530709913.}} }
\author{Katharina Kormann\thanks{Department of Mathematics, Ruhr-Universität Bochum, Universitätsstraße 150, D-44801 Bochum, Germany
    (\email{k.kormann@rub.de})}
  \and Murtazo Nazarov\thanks{Division of Scientific Computing, Department of Information Technology, Uppsala University, Uppsala 751 05 Sweden. 
    (\email{murtazo.nazarov@uu.se}).}
  \and Junjie Wen\thanks{Correspondent author. Division of Scientific Computing, Department of Information Technology, Uppsala University, Uppsala 751 05 Sweden. (\email{junjie.wen@it.uu.se}).}}

\usepackage{amsopn}

\begin{document}

\maketitle

\begin{abstract} 
In this manuscript, we introduce positivity-preserving correction methods for low-rank approximations of the Vlasov equation. The key idea is to formulate structural properties, including positivity-preservation, as constraints and to seek a minimal correction term that is added to the low-rank solution, by solving a quadratic programming problem. As a result, the corrected solution satisfies the constraints and preserve these properties, while remaining close to the original low-rank solution.
Two positivity-preserving schemes are proposed in this work, and one of them also preserves the total mass and momentum of the system. We apply the proposed methods to a Vlasov--Poisson and Vlasov--Poisson-BGK employing a spectral discretization in space and an explicit Runge--Kutta scheme in time. Numerical experiments demonstrate the effectiveness of the proposed methods.
\end{abstract}

\begin{keyword}
  Vlasov--Poisson equation; dynamical low-rank approximation; structure-preserving; positivity-preserving; optimization-based correction.
\end{keyword}



\begin{MSCcodes}35Q83, 65F55, 65K10
\end{MSCcodes}

\section{Introduction}

The kinetic description of plasma dynamics is crucial for studying plasma physics and understanding various plasma phenomena. The plasma is modeled by a distribution function in phase-space that evolves in electromagnetic fields. This evolution is descriped for collisionless plasmas by the so-called Vlasov equation that is coupled to Maxwell's equation for evolution of the self-consistent fields. For electromagnetic phenomena, the Vlasov--Poisson model is common where the electric field is modelled by a Poisson problem. If collisions play a role, collision operators like the BGK or more sophisticated models are added to the Vlasov equation. Developing accurate and reliable numerical approximations for Vlasov equations has been an important but non-trivial task, and one commonly recognized challenge is due to the high dimensionality of the phase space, which leads to the so-called curse of dimensionality. Low-rank approximations for the Vlasov--Poisson equation have attracted great interest and there are serveral works have been introduced, including the tensor train framework \cite{MR3376137}, the Progressive Generalized Decomposition method \cite{MR3632657}, and the SVD-type truncation \cite{MR4389746}.

In addition to the above mentioned schemes, dynamical low-rank methods using the projector-splitting integrator have been introduced and applied for the Vlasov--Poisson equation; see \cite{MR3177960,MR3863075}. The dynamical low-rank (DLR) method is effective for the Vlasov equations especially in the high-dimensional setttings, as it avoids full phase-space storage and reduces the computational cost. In this work, we employ a rank-adaptive low-rank integrator introduced in \cite{MR4512602}, and we use a standard discretization with spectral methods in space and an explicit Runge--Kutta scheme in time. We use this framework as a testbed to introduce positivity-preserving correction methods, which are applicable in more general settings.

Preserving the non-negativity of the distribution function is a fundamental requirement for numerical approximations of the Vlasov equations. Consequently, the development of positivity-preserving numerical methods has attracted considerable attention. For the full-rank setting, a wide range of positivity-preserving schemes has been proposed based on various spatial discretizations, most notably discontinuous Galerkin (DG) and finite volume (FV) methods; see, \eg \cite{MR2806222, MR2843721, MR4329985, MR3227274, MR4844786, zhang2025positivitypreservinghighordersemilagrangianspectral}. We note that the core limiting algorithm employed in many of these works was originally introduced in \cite{MR2601091}.

In contrast, the literature on positivity-preserving continuous finite element (FE) methods for the Vlasov equations remains relatively limited. We refer the reader to the recent work of \cite{SISC2026}, where an artificial-viscosity-based approach was developed to construct a positivity-preserving continuous finite element approximation of the Vlasov equations. 

However, these techniques do not readily extend to dynamical low-rank approximations. Indeed, the design of high-order positivity-preserving low-rank methods remains an open problem, primarily because suitable low-order positivity-preserving schemes that can serve as the basis for high-order limiting strategies are currently unavailable; see, \eg \cite{MR4925872}.

One notable exception is the approach proposed in \cite{Ye_Loureiro_2024}, where the distribution function $f$ is defined as $f = g^2$ to ensure its positivity and the dynamic low-rank approximation is applied to $g$ instead of $f$. However, this formulation is restricted to a first-order differential operator. To the best of our knowledge, no general positivity-preserving framework has been developed for dynamical low-rank approximations of the Vlasov equations. Addressing this gap is the primary objective and contribution of the present work. One major difficulty of developing positivity-preserving low-rank approximations for the Vlasov equations lies in the fact that the distribution function is not computed explicitly but is decomposed into low-rank tensors, and the connection between the positivity of $f$ and the low-rank factors is not well established yet.  

In addition to positivity, other structural properties of the Vlasov equations, such as the conservation of mass, momentum, and energy, are of fundamental importance. Preserving these invariants is considerably more challenging in dynamical low-rank approximations because the low-rank truncation inherent in the method generally destroys the corresponding conservation properties; see, for example, \cite{MR4728768} introduces a low-rank FE framework that does not conserve the total mass. By contrast, for full-rank approximations, it is relatively straightforward to construct mass conservative FE schemes; see, \eg, \cite{MR4907486,MR4945433}. However, these conservation properties are generally lost in dynamical low-rank methods unless additional corrective procedures are incorporated.

A conservative dynamical low-rank framework was proposed in \cite{MR4280248} and further developed in \cite{MR4569231}, where the authors exploited the underlying continuity equations and introduced suitable correction terms to enforce the desired conservation laws. This approach demonstrates that conservation can be recovered within the low-rank setting, albeit at the expense of additional correction steps.

In this work, we introduce an optimization-based correction technique for dynamical low-rank methods and apply it to the Vlasov equation to enforce positivity preservation and other conservation properties. Let $\tilde{f}$ be the low-rank solution obtained by a standard numerical method, and let $f$ be a corrected low-rank solution satisfying the desired properties, such as posivity-preserving, i.e., $f\geq0$. We seek the minimal correction term $\delta f$: $f=\tilde{f}+\delta f$, by solving the optimization problem: minimize $\Vert\delta f\Vert$, subject to $f\geq 0$. We introduce two correction methods based on this idea and both of them are positivity-preserving; in addition to positivity-preserving, the second one preserves the total mass and momentum while the first introduces a smaller overhead. We will consider the Vlasov--Poisson system in this work but, since the correction is applied in a separate correction step after each time step, it is more general. Similar ideas can be found in \cite{MR5048905,liu2026efficientadmissiblesetprojection}, in which optimization-based limiters are introduced to preserve invariant-domain-preserving for gas dynamics equations and ideal MHD but not applied in the context of dynamical low-rank compression. 

The rest of this manuscript is organized as follows: in Section~\ref{sec:LRVP}, we introduce the Vlasov--Poisson equation and a low-rank approximation for it. In Section~\ref{sec:PPKL}, we introduce a positivity-preserving correction method for the low-rank approximations of the Vlasov equations, and we also introduce the active-set method for solving the optimization problem. In Section~\ref{sec:MMCPPS}, we introduce a mass and momentum conservative as well as positivity-preserving correction method for the low-rank approximations of the Vlasov equations. All the numerical results are presented in Section~\ref{sec:EXP}, together with the details of the spatial and temporal discretizations. The contributions and limitations of this manuscript, and future works are discussed in Section~\ref{sec:CON}.
\section{A low-rank Vlasov--Poisson framework}\label{sec:LRVP}
In this section, we introduce the Vlasov--Poisson equation and a dynamical low-rank framework for it.
\subsection{The Vlasov--Poisson equation} Let $\Omega_{\pmb{x}} \subset \polR^d$ and $\Omega_{\pmb{v}} \subset \polR^d$ be the spatial and velocity domains, respectively, where $d=1,2,3$. In the phase space $\Omega := \Omega_{\pmb{x}} \times \Omega_{\pmb{v}}$, the Vlasov--Poisson equations describing the evolution of the distribution function $f_s(\pmb{x},\pmb{v},t)$ for a species $s$ with charge $q_s$ and mass $m_s$ in a collisionless plasma are given by
\begin{equation}\label{eq:vp}
  \partial_t{f_s}+\pmb{v}\cdot\nabla_{\pmb{x}} f_s+\frac{q_s}{m_s}\pmb{E}\cdot\nabla_{\pmb{v}} f_s=0,
\end{equation}
where $\pmb{E}(\pmb{x},t)= -\nabla_{\pmb{x}}\Phi(\pmb{x},t)$ is the self-consistent electric field, which can be obtained by solving Poisson's equation
\begin{equation}\label{eq:poisson}
  -\nabla^2_{\pmb{x}}\Phi = \rho,
\end{equation}and the charge density $\rho(\pmb{x},t)$ is given by
\begin{equation}\label{eq:charge}
  \rho = \sum_s\int_{\Omega_{\pmb{v}}} q_sf_s\ {\rm d}\pmb{v}.
\end{equation}

For simplicity, we only consider the one-species Vlasov--Poisson system in a neutralizing background in the rest of the paper, and we drop the subscript $s$ for the distribution function, charge, and mass. 

\subsection{Low-rank projectors}
We employ a low-rank representation of the distribution function, as introduced in \cite{MR3863075}. Let us introduce $X_i(\px,t)$, $V_j(\pv,t)$, $i,j=1,\ldots,r$, and the coefficient matrix $\bS = (S_{ij}(t))_{i,j=1}^r$, where $X_i\in L^2(\Omega_{\px})$, $V_j\in L^2(\Omega_{\pv})$, and $S_{ij}\in\polR$, $i,j=1,\ldots,r$. The distribution function is approximated by $f_h$:
\begin{equation}\label{eq:dlrdecomp}
  f_h(\px,\pv,t)=\sum_{i,j=1}^{r} X_i(\px,t) S_{ij}(t)V_j(\pv,t),\notag
\end{equation}
and computing the time derivative of $f_h$, we obtain:
\begin{equation}\label{eq:dlrDerivative}
  \partial_t f_h = \sum_{i,j=1}^r\left(\dot{X}_iS_{ij}V_j+X_i\dot{S}_{ij}V_j+X_iS_{ij}\dot{V}_j\right).\notag
\end{equation}
For uniqueness of the low-rank decomposition, we impose the orthonormality conditions $(X_i,X_j)_{\px}= \delta_{ij}$ and $(V_i,V_j)_{\pv}=\delta_{ij}$, together with the gauge conditions $(X_i,\dot{X}_j)_{\px}=0$ and $(V_i,\dot{V}_j)_{\pv}=0$, for $i,j=1,\ldots,r$, 
where $(\cdot,\cdot)_{\px}$ and $(\cdot,\cdot)_{\pv}$ are the inner products on $L^2(\Omega_{\px})$ and $L^2(\Omega_{\pv})$, respectively. In addition, let the coefficient matrix $\bS$ remain of full rank $r$.

Hence, our approximations belong for $r\geq 1$ to the manifold of rank-$r$ matrices
\begin{equation}
\begin{aligned}
\mathcal{M}_r=\Bigl\{g\in L^2(\Omega_{\px}\times\Omega_{\pv}):g=\sum_{i,j=1}^r X_i S_{ij} V_j,\ \text{with } \bS \text{ invertible}, \\ 
X_i\in L^2(\Omega_{\px}), V_j\in L^2(\Omega_{\pv}), (X_i,X_j)_{\px}= \delta_{ij}, (V_i,V_j)_{\pv}= \delta_{ij}\Bigr\},
\end{aligned}\notag
\end{equation}
and the corresponding tangent space is given by
\begin{equation}
\begin{aligned}
\mathcal{T}_f\mathcal{M}_r=\Bigl\{\dot{g}\in L^2(\Omega_{\px}\times\Omega_{\pv}):\dot{g}=\sum_{i,j=1}^r
\bigl(\dot{X}_i S_{ij} V_j + X_i \dot{S}_{ij} V_j + X_i S_{ij} \dot{V}_j\bigr),\ \text{with }\\ \bS \text{ invertible}, \dot{X}_i\in L^2(\Omega_{\px}), \dot{V}_j\in L^2(\Omega_{\pv}), (X_i,X_j)_{\px}= \delta_{ij}, (V_i,V_j)_{\pv}= \delta_{ij}
\Bigr\}.
\end{aligned}\notag
\end{equation}
We define the associated subspaces
\begin{equation}
  \calX={\rm span}\{X_i\}_{i=1}^r,\qquad\calV={\rm span}\{V_j\}_{j=1}^r,\notag.
\end{equation}
Dynamical low-rank approximations seek solutions propagated over time on the low-rank manifold \cite{koch2007dynamical}. A first robust version was the projector splitting integrator \cite{MR3177960} which evolved the building blocks of the decomposition \eqref{eq:dlrdecomp} separately by decomposing the projector onto the tangent space $\calT_f\mathcal{M}_r$ as follows
\begin{equation}
  P_{\calT_f\calM} = P_{\calV}-P_{\calX\calV}+P_{\calX}. \notag
\end{equation}
In this work, for handling the split projectors, we employ the basis-update-and-Galerkin (BUG) integrator, originally introduced in \cite{MR4375023}, together with its rank-adaptive version proposed in \cite{MR4512602}. We will briefly summarize the algorithm in the following subsection.


\subsection{The rank-adaptive basis-update-and-Galerkin integrator}
Define $K_j = \sum_{i=1}^r X_i S_{ij}$ and $L_i = \sum_{j=1}^r V_j S_{ij}$, $i,j=1,\ldots,r$, and it then follows naturally that $f_h = \sum_{j=1}^r K_j V_j= \sum_{i=1}^r X_i L_i$. 
Let $\bK^m$, $\bL^m$, $\bS^m$ given approximations at time $t_m$.  The BUG integrator splits the integration into three steps: An evolution of the $\bK$ matrix through freezing $\bV$ at $\bV^m$ and multiplying \eqref{eq:dlrDerivative} from the left by $(\bV^m)^\top$ yielding the evolution equation
\begin{equation}\label{eq:Kstep}
  \partial_t K_j= -\sum_{l=1}^r\left(\pmb{c}_{jl}^{1,m}\cdot\nabla_{\pmb{x}} K_l + (\pmb{c}_{jl}^{2,m}\cdot\pmb{E})K_l\right),
\end{equation}
where
\begin{align}
  \pmb{c}_{jl}^{1,m} = (V_j^m,\pmb{v} V_l^m)_{\pv},\qquad\pmb{c}_{jl}^{2,m} = (V_j^m,\nabla_{\pmb{v}} V_l^m)_{\pv}.
\end{align}
Analogously, we obtain the evolution equation for $L$ by freezing $\bU^m$
\begin{equation}\label{eq:Lstep}
    \partial_t L_i= -\sum_{k=1}^r\left(\pmb{d}_{ik}^{1,m}\cdot\nabla_{\pmb{v}} L_k + (\pmb{d}_{ik}^{2,m}\cdot\pmb{v})L_k\right),
\end{equation}
where
\begin{align}
  \pmb{d}_{ik}^{1,m} = (X_i^m,\pmb{E} X_k^m)_{\px},\qquad\pmb{d}_{ik}^2 = (X_i^m,\nabla_{\pmb{x}} X_k^m)_{\px}.\notag
\end{align}
Finally, fixing $\bX$ and $\bV$ at the new time $t_{m+1}$ yields the evolution equation of $\bS$
\begin{equation}\label{eq:Sstep}
    \partial_t S_{ij}=-\sum_{k,l=1}^r(\pmb{c}_{jl}^1\cdot\pmb{d}_{ik}^{2,m+1}+\pmb{c}_{jl}^{2,m+1}\cdot\pmb{d}_{ik}^{1,m+1})S_{kl}.
\end{equation}

We discretize $\Omega_{\px}$ and $\Omega_{\pv}$ using $N_x$ and $N_v$ quadrature nodes, respectively. Let $\{X_{j,i}\}_{j=1}^{N_x}$ and $\{V_{j,i}\}_{j=1}^{N_v}$ denote the degrees of freedom associated with $X_i$ and $V_i$, respectively. For ease of subsequent discussion, we denote the low-rank solutions in the matrix form as follows: let
\begin{equation}
\bX = \big[\pX_1,\ldots,\pX_r\big],\qquad 
\bV = \big[\pV_1,\ldots,\pV_r\big],\notag
\end{equation}
where
\begin{equation}
\pX_i = [X_{1,i},\ldots,X_{N_x,i}]^\mathsf{T}, \qquad 
\pV_i = [V_{1,i},\ldots,V_{N_v,i}]^\mathsf{T}, 
\qquad i=1,\ldots,r.\notag
\end{equation}
Similarly, define
\begin{equation}
\bK = \big[\pK_1,\ldots,\pK_r\big], \qquad 
\bL = \big[\pL_1,\ldots,\pL_r\big], \notag
\end{equation}
where
\begin{equation}
\pK_i = [K_{1,i},\ldots,K_{N_x,i}]^\mathsf{T}, \qquad 
\pL_i = [L_{1,i},\ldots,L_{N_v,i}]^\mathsf{T}, 
\qquad i=1,\ldots,r,\notag
\end{equation}
and $\{K_{j,i}\}_{j=1}^{N_x}$ and $\{L_{j,i}\}_{j=1}^{N_v}$ denote the corresponding degrees of freedom of $K_i$ and $L_i$.
According to the definition of the low-rank operators, the following relations hold:
\begin{equation}
\bK = \bX\bS, \qquad\bL = \bV(\bS)^\mathsf{T}, \qquad\bF = \bX\bS(\bV)^\mathsf{T},\notag
\end{equation}
where $\bF$ contains all degrees of freedom of $f_h$. The QR decomposition is widely used on $\bK$ and $\bV$ to obtain $\bX$ and $\bV$, respectively, as the resulting matrices satisfy
$\bX^\mathsf{T}\bX=\bI_r$ and $\bV^\mathsf{T}\bV=\bI_r$, thereby automatically fulfilling the orthogonality conditions.

 Let $\varepsilon$ be a given tolerance, and for each time step, the integrator algorithm is described in Algorithm~\ref{alg:RAMI}. The spatial and temporal discretizations of the split equations, \eqref{eq:Kstep}, \eqref{eq:Sstep}, and \eqref{eq:Lstep} can be performed by any proper numerical methods, and we utilize the spectral method and explicit Runge-Kutta method as a toy example in our numerical experiments, which will be introduced afterwards.
\begin{algorithm}[htbp]
  \caption{A rank-adaptive matrix integrator}\label{alg:RAMI}
  \begin{algorithmic}[1]
    \Require $\bX^m$, $\bS^m$, $\bV^m$, $\varepsilon$, and $r_{\rm max}$.
    \Ensure $\bX^{m+1}$, $\bS^{m+1}$, $\bV^{m+1}$.
    \State $\bK$-step: Perform \eqref{eq:Kstep} on $\bK^m=\bX^m\bS^m$ and obtain $\bK^{m+1}$.
    \State Compute an orthonormal basis $\overline{\mathbf{X}}$ via the QR decomposition of $[\mathbf{K}^{m+1}, \mathbf{X}^m]$, and set $\mathbf{M} = \overline{\mathbf{X}}^\mathsf{T} \mathbf{X}^m$.
    \State $\bL$-step: Perform \eqref{eq:Lstep} on $\bL^m=\bV^m(\bS^m)^\mathsf{T}$ and obtain $\bL^{m+1}$.
    \State Compute an orthonormal basis $\overline{\mathbf{V}}$ via the QR decomposition of $[\mathbf{L}^{m+1}, \mathbf{V}^m]$, and set $\mathbf{N} = \overline{\mathbf{V}}^\mathsf{T} \mathbf{V}^m$.
    \Comment{Steps 4-5 are performed in parallel with steps 2-3.}
    \State $\bS$-step: Perform \eqref{eq:Sstep} on $\overline{\bS}^m=\bM\bS^m\bN^\mathsf{T}$ and obtain $\overline{\bS}^{m+1}$.
    \State Compute the SVD $\overline{\bS}^{m+1}=\bP\mathbf{\Sigma}\bQ^\mathsf{T}$, where $\mathbf{\Sigma}={\rm diag}(\sigma_1,\ldots,\sigma_{2r})$.
    \State Compute the minimal \( r_1 \leq \min\{2r, r_{\max}\} \) such that $\sqrt{\sum_{j=r_1+1}^{2r}\sigma_j^2}\leq\varepsilon\Vert\Sigma\Vert_F$.
    \State Let $\bS^{m+1}={\rm diag}(\sigma_1,\ldots,\sigma_{r_1})$, $\bX^{m+1}$ and $\bV^{m+1}$ be the first $r_1$ columns of $\bP$ and $\bQ$, respectively.
  \end{algorithmic}
\end{algorithm}

\section{A positivity-preserving low-rank method}\label{sec:PPKL}
In this section, we introduce a positivity-preserving correction method for the low-rank approximations of the Vlasov equations. The main idea is to solve a constrained optimization problem that enforces the positivity as a targeted property. This yields a corrected solution that is positivity-preserving while remaining close to the uncorrected solution.
\subsection{A positivity-preserving correction scheme}
Our first method applies a correction to either $\bK$ or $\bL$ after performing a step with Algorithm \ref{alg:RAMI}. Since the method is symmetic in $\bK$ and $\bL$, we restrict our discussion to $\bK$.

At the discrete level, the positivity-preserving property ensures that the distribution function $f_h$ remains non-negative at the required set of points (e.g., quadrature points or cell averages). In our framework, this is equivalent to
\begin{equation}
   \bF\succeq \pmb{0}_{N_x\times N_v}.\notag
\end{equation}
However, conventional schemes such as positivity-preserving limiters are usually not feasible for the low-rank approximations, as $\bF$ is not solved explicitly but rather decomposed as $\bF = \bK(\bV)^\mathsf{T}$.
We propose the following correction scheme for $\bK$ to achieve preservation of positivity.

Let $\widetilde{\bK}$ and $\bV$ be the solutions obtained by the low-rank schemes after performing  Algorithm \ref{alg:RAMI}, we do the following correction and obtain $\bK$:
\begin{equation}
 \bK=\widetilde{\bK}+\delta\bK,\notag
\end{equation}
such that
\begin{equation}\label{eq:K_pp}
   \bF=\bK(\bV)^\mathsf{T}=(\widetilde{\bK}+\delta\bK)(\bV)^\mathsf{T}\succeq \pmb{0}_{N_x\times N_v},
\end{equation}
where $\delta\bK\in\polR^{N_x\times r}$. The inequality \eqref{eq:K_pp} is equivalent to
\begin{equation}
   \delta\bK(\bV)^\mathsf{T}\succeq-\widetilde{\bK}(\bV)^\mathsf{T}.\notag
\end{equation}
On the other hand, the added $\delta \bK$ results  in a perturbation $\delta\bF$ in our low-rank solution $\widetilde{\bF}=\widetilde{\bK}(\bV)^\mathsf{T}$. Let $\bF_a\in\polR^{N_x\times N_v}$ be the matrix that contains the degrees of freedom of the analytical solution in the chosen basis, and the error of the low-rank methods (before the correction) can be evaluated approximated by
$\Vert\widetilde{\bF}-\bF_a\Vert_F$, where $\Vert\cdot\Vert_F$ denotes the Frobenius norm. Due to the perturbation $\delta\bF$, the error becomes $\Vert\bF-\bF_a\Vert_F$, and
\begin{equation}\label{eq:err_control}
  \Vert\bF-\bF_a\Vert_F=\Vert\delta\bF+\widetilde{\bF}-\bF_a\Vert_F\leq\Vert\widetilde{\bF}-\bF_a\Vert_F+\Vert\delta\bF\Vert_F,
\end{equation}
which implies that the error of the corrected solution is bounded by the error before to the correction and the magnitude of $\|\delta \bF\|_F$. Given that $\bV$ is fixed during this correction step, and 
\begin{equation}
  \Vert\delta\bF\Vert_F=\Vert\delta\bK(\bV)^\mathsf{T}\Vert_F\leq\Vert\delta\bK\Vert_F\Vert\bV\Vert_F,\notag
\end{equation}
$\delta\bK$ is supposed to be as small as possible to minimize the error of the corrected solution. Therefore, the ideal $\delta\bK$ is supposed to satisify \eqref{eq:K_pp} and minimize $\Vert\delta\bK\Vert_F$. Hence, we formulate the above problem into optimization-style as follows:
\begin{equation}\label{eq:OP}
  \begin{aligned}
    &{\rm minimize}&\qquad&\Vert\delta\bK\Vert_F\\
    &{\rm subject\ to}&\qquad& -\delta\bK(\bV)^\mathsf{T}\preceq\widetilde{\bK}(\bV)^\mathsf{T}.
  \end{aligned}
\end{equation}

The optimization problem \eqref{eq:OP} can be decomposed into several subproblems to enhance computational efficiency while preserving the robustness of the solutions.
\begin{proposition}[Row Separability] 
Let $\delta\bK_i\in\polR^{1\times r}$ and $\bb_i\in\polR^{1\times N_v}$ denote the $i$-th rows of $\delta\bK$ and $\widetilde{\bK}(\bV)^\mathsf{T}$, respectively. Solving \eqref{eq:OP} is equivalent to solving the $N_x$ independent problems:
\begin{equation}\label{eq:SOPi}
  \begin{aligned}
    &{\rm minimize}&\qquad&\Vert\delta\bK_i\Vert_2\\
    &{\rm subject\ to}&\qquad& -\delta\bK_i(\bV)^\mathsf{T}\leq\bb_i\Leftrightarrow -\bV(\delta\bK_i)^\mathsf{T}\leq\bb_i^\mathsf{T}.
  \end{aligned}
\end{equation}
\end{proposition}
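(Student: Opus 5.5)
The plan is to show that both the objective and the constraint set of \eqref{eq:OP} split row by row. Once that is established, a minimizer of the full problem is exactly a stacking of minimizers of the subproblems \eqref{eq:SOPi}.

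First, the constraint. Matrix multiplication acts row-wise: the $i$-th row of $\delta\bK(\bV)^\mathsf{T}$ equals $\delta\bK_i(\bV)^\mathsf{T}$. The entrywise inequality $-\delta\bK(\bV)^\mathsf{T}\preceq\widetilde{\bK}(\bV)^\mathsf{T}$ therefore holds if and only if $-\delta\bK_i(\bV)^\mathsf{T}\leq\bb_i$ for every $i=1,\dots,N_x$. Transposing each row inequality gives the equivalent form $-\bV(\delta\bK_i)^\mathsf{T}\leq\bb_i^\mathsf{T}$. The feasible set $\mathcal{C}$ of \eqref{eq:OP} is thus the Cartesian product $\mathcal{C}_1\times\cdots\times\mathcal{C}_{N_x}$, where $\mathcal{C}_i\subset\polR^{1\times r}$ is the feasible set of \eqref{eq:SOPi}. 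Each $\mathcal{C}_i$ is a closed convex polyhedron.

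Second, the objective. Minimizing $\Vert\delta\bK\Vert_F$ is equivalent to minimizing $\Vert\delta\bK\Vert_F^2$, because squaring is monotone on nonnegative reals. We have $\Vert\delta\bK\Vert_F^2=\sum_{i=1}^{N_x}\Vert\delta\bK_i\Vert_2^2$, and the $i$-th term depends only on the variable $\delta\bK_i$, which ranges over $\mathcal{C}_i$ independently of the others. Hence
\[
\min_{\mathcal{C}}\Vert\delta\bK\Vert_F^2=\sum_{i=1}^{N_x}\min_{\mathcal{C}_i}\Vert\delta\bK_i\Vert_2^2 .
\]
A point of $\mathcal{C}$ is optimal if and only if each of its rows is optimal for the corresponding subproblem. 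If some row were not optimal, replacing it by a better feasible row would keep the point in $\mathcal{C}$ and strictly decrease the sum. Conversely, stacking row-wise minimizers attains the sum of the row minima, which is a lower bound for the objective over $\mathcal{C}$.

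The only real subtlety is existence and uniqueness, so that "equivalent" is meaningful. Feasibility of each $\mathcal{C}_i$ is immediate: $\delta\bK_i=-\widetilde{\bK}_i$ gives $-\delta\bK_i(\bV)^\mathsf{T}=\bb_i\leq\bb_i$. Each subproblem then minimizes a strictly convex, coercive function over a nonempty closed convex set, so it has a unique minimizer, namely the Euclidean projection of $0$ onto $\mathcal{C}_i$. The same argument applied to \eqref{eq:OP} gives a unique minimizer there as well. This minimizer must coincide with the stacked row minimizers, which completes the equivalence.
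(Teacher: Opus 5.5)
Your proof is correct and follows the same route as the paper. The constraint $-\delta\bK(\bV)^\mathsf{T}\preceq\widetilde{\bK}(\bV)^\mathsf{T}$ acts row by row and $\Vert\delta\bK\Vert_F^2=\sum_i\Vert\delta\bK_i\Vert_2^2$, so \eqref{eq:OP} splits into the $N_x$ independent subproblems; your added details (feasibility via $\delta\bK_i=-\widetilde{\bK}_i$, replacing the norm by its square, and uniqueness as the projection of $0$ onto a nonempty closed convex set) just make explicit what the paper's one-line argument leaves implicit.
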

\begin{proof}
Since the Frobenius norm is separable row-wise and the constraints do not couple the spatial degrees of freedom, the optimization problem \eqref{eq:OP} decomposes into \(N_x\) completely independent optimization subproblems.
\end{proof}

As mentioned earlier, $\mathbf{L}$ has a symmetric structure to $\mathbf{K}$. Hence, the same positivity-preserving correction can be applied analogously to $\mathbf{L}$:
\begin{equation}\label{eq:OP_L}
  \begin{aligned}
    &{\rm minimize}&\qquad&\Vert\delta\bL\Vert_F\\
    &{\rm subject\ to}&\qquad& -\delta\bL(\bX)^\mathsf{T}\preceq\widetilde{\bL}(\bX)^\mathsf{T},
  \end{aligned}
\end{equation}
and we omit the details here.

\subsection{Quadratic programming and active-set method}
The subproblems for \eqref{eq:OP} and \eqref{eq:OP_L} are all instances of quadratic programming (QP) problems. The standard QP problem takes the form
\begin{equation}
  \begin{aligned}\label{eq:QP}
    &{\rm minimize}&\qquad& \frac{1}{2}\bx^\mathsf{T}\bQ\bx + \bc^\mathsf{T}\bx\\
    &{\rm subject\ to}&\qquad& \bA\bx \leq \by, \qquad \bA_{eq}\bx = \by_{eq},
  \end{aligned}
\end{equation}
We take the subproblems \eqref{eq:SOPi} as an example to illustrate the structure of the QP. Let $\bc = \mathbf{0}$, $\bQ = 2\bI_r$, $\bA=-\bV$, $\bA_{eq}=\pmb{0}$, and $\by_{eq}=\pmb{0}$, and for the $i$-th subproblem \eqref{eq:SOPi}, let $\by=\bb_i^\mathsf{T}$, then \eqref{eq:QP} seeks the $\delta\bK_i$ that satisifies $-\bV(\delta\bK_i)^\mathsf{T}\leq\bb^\mathsf{T}_i$ and minimizes $\Vert\delta\bK_i\Vert_2^2$ (so $\Vert\delta\bK_i\Vert_2$).
In addition, the QP problem \eqref{eq:QP} is convex, since $\bQ$ is positive semidefinite, which implies that any local minimum is also a global minimum. We define the Lagrangian function:
\begin{equation}
  \mathcal{L}(\bx,\pmb{\lambda})
  =
  \frac{1}{2}\bx^\mathsf{T}\bQ\bx + \pmb{\lambda}^\mathsf{T}(\bA\bx-\by),\notag
\end{equation}
where $\pmb{\lambda}\in\polR^{N_v\times1}$ are the multipliers associated with the inequality constraints. The Karush--Kuhn--Tucker (KKT) conditions are given by:
\begin{equation}
\begin{aligned}
  \text{Stationarity:} \qquad & \bQ\bx + \bA^\mathsf{T}\pmb{\lambda}= \pmb{0}, \\
  \text{Primal feasibility:} \qquad & \bA\bx \le \by, \\
  \text{Dual feasibility:} \qquad & \pmb{\lambda}\ge\pmb{0},\quad \\
  \text{Complementarity:} \qquad & \pmb{\lambda}^\mathsf{T}(\bA\bx - \by)=0.
\end{aligned}\notag
\end{equation}

\begin{proposition}[Active-Set Sparsity] 
Under non-degeneracy, at most $r$ constraints are strictly active at the optimum for \eqref{eq:QP}.
\end{proposition}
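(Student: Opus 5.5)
The plan is to read ``non-degeneracy'' as linear independence of the gradients of the strictly active constraints (LICQ), together with strict complementarity, so that ``strictly active'' means active with a positive multiplier. In the setting of \eqref{eq:QP} coming from \eqref{eq:SOPi}, the unknown is $\bx=(\delta\bK_i)^\mathsf{T}\in\polR^{r}$ and the constraint matrix is $\bA=-\bV\in\polR^{N_v\times r}$. Row $k$ of $\bA$, call it $\pmb{a}_k^\mathsf{T}$, is the gradient of the $k$-th constraint.

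First, I will fix an optimum $\bx^*$; it exists and is unique because $\bQ=2\bI_r$ makes the objective strictly convex, and the feasible set is nonempty since $\delta\bK_i=-\widetilde{\bK}_i$ gives $\bF$'s row equal to zero. Let $\mathcal{A}=\{k:\lambda_k>0\}$ be the strictly active set. By complementarity, $\pmb{a}_k^\mathsf{T}\bx^*=y_k$ for every $k\in\mathcal{A}$. Stationarity then reads $\bQ\bx^*=-\sum_{k\in\mathcal{A}}\lambda_k\pmb{a}_k$.

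Second, I will use the non-degeneracy assumption: the vectors $\{\pmb{a}_k\}_{k\in\mathcal{A}}$ are linearly independent in $\polR^r$. Since any linearly independent family in $\polR^r$ has at most $r$ elements, $|\mathcal{A}|\le r$. The same argument applies to the $\bL$-problem \eqref{eq:OP_L} with $\bX$ in place of $\bV$. I will also add a remark that the full active set, including zero multipliers, has at most $r$ elements under LICQ by the identical dimension count.

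If one prefers non-degeneracy to mean only that the multiplier vector is a vertex (basic) solution, the alternative route is Carathéodory's theorem for cones. The vector $-\bQ\bx^*$ lies in the cone generated by $\{\pmb{a}_k\}$ for the active constraints. Hence it can be written using at most $r$ linearly independent generators with positive coefficients, so some KKT multiplier has at most $r$ positive entries. Under uniqueness of multipliers, which is guaranteed by non-degeneracy, this is the multiplier.

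The main obstacle is purely definitional: making precise which notion of non-degeneracy is assumed. Once that is fixed, the argument is a dimension count in $\polR^r$. I would state LICQ explicitly and mention the Carathéodory variant as a remark.
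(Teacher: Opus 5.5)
Your proof is correct and follows essentially the paper's argument. The paper likewise uses stationarity to tie $\mathbf{x}^\star$ to the rows $\mathbf{a}_k$ with $\lambda_k>0$, then concludes by the dimension count that at most $r$ linearly independent rows of $\mathbf{A}$ fit in $\mathbb{R}^r$; your version improves on it by stating the linear-independence (LICQ) reading of ``non-degeneracy'' that the paper's last sentence tacitly uses, which makes clear the bound comes from that assumption rather than from the span relation.
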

\begin{proof}
The KKT stationarity condition yields $\mathbf{x}^\star = -\frac{1}{2}\mathbf{A}^\mathsf{T}\boldsymbol{\lambda}$.
Thus, \(\mathbf{x}^\star\) lies in the span of the rows of \(\mathbf{A}\) corresponding to indices \(j\) such that \(\lambda_j > 0\), i.e., the active constraints. Since \(\mathbf{x}^\star \in \mathbb{R}^r\), the active set can contain at most \(r\) linearly independent rows of \(\mathbf{A}\).
\end{proof}

There are multiple methods for solving quadratic programming problems; in our case, the active-set method is a promising choice due to the active-set sparsity. Let $\cal$ be the true active set for \eqref{eq:QP}, then $\calS\subset\{1,\ldots,N_v\}$ and $\vert\calS\vert\leq r$. While performing the active-set iteration, the equality-constrained problem yields the
closed-form primal-dual solution:
\begin{equation}\label{eq:CFPD}
  \begin{aligned}
    \pmb{\mu}^\star&=-2(\bA_{\calS}\bA_{\calS}^\mathsf{T})^{-1}\by_{\calS},\\
    \delta\bK^\star&=\frac{1}{2}\bA_{\calS}^\mathsf{T}\pmb{\mu}^\star=-\bA_{\calS}^\mathsf{T}(\bA_{\calS}\bA_{\calS}^\mathsf{T})^{-1}\by_{\calS},
  \end{aligned}
\end{equation}
where $\bA_{\calS}$ and $\by_{\calS}$ are the sub-matrix and sub-vector corresponding to the active indices. The complexity of solving the system \eqref{eq:CFPD} is $O(r^3)$. 

We apply the following workflow for solving \eqref{eq:OP}: for $i=1,\ldots,N_v$, check the condition $\bb_i\geq\pmb{0}$; if true, set $\delta\bK_i=\pmb{0}$. Otherwise, we employ a standard active-set iteration using the closed-form solution \eqref{eq:CFPD} to solve $\eqref{eq:SOPi}$ and obtain $\delta\bK_i$.
The row separability is crucial here, as it reduces the complexity of the optimization problem \eqref{eq:OP} significantly. Without the row separability, \eqref{eq:OP} can still be formulated as a single QP problem in a form similar to \eqref{eq:QP}, but the number of variables increases to $N_xr$, leading to a computational complexity of $O(N_x^3r^3)$ for \eqref{eq:CFPD}, and this is extremely expensive when $r\ll N_x$. In addition, the row separability allows us to solve the independent subproblems in parallel, which can further improve the efficiency of the correction method. However, for this method, imposing global constraints such as conservation of total mass is not trivial, as it breaks the row-separability structure of the scheme.

\subsection{Summary of the algorithm}
In the low-rank approximation of the Vlasov equation, the distribution function is decomposed as $\bF = \bK \bV^{\mathsf{T}} = \bX \bL^{\mathsf{T}}$. We apply the positivity-preserving correction method introduced above to either $\bK$ or $\bL$, thereby rendering the low-rank Vlasov--Poisson framework po\-sit\-ivity-pr\-ese\-rving, and we denote this method as $\mathbf{PP}$-$\mathbf{KL}$. The correction can be applied to either $\bK$ or $\bL$, and we switch the correction between $\bK$ and $\bL$ whenever infeasibility occurs in the optimization problems \eqref{eq:OP} and \eqref{eq:OP_L}. The procedure of the $\mathbf{PP}$-$\mathbf{KL}$ method is described in Algorithm~\ref{alg:PPKL_correction}.

\begin{remark}
For the practical implementation, we employ an active-set method, which allows the inequality constraints to be violated within a prescribed numerical tolerance. To ensure positivity of the numerical solution, we impose the constraint
\begin{equation}
\bF \succeq \epsilon\mathbf{1}_{N_x\times N_v},\notag
\end{equation}
instead, where $\epsilon = 10^{-9}$ is a small positive constant. In addition, the constraint tolerance is chosen to be $0.1\epsilon$, i.e., $10^{-10}$.
\end{remark}
\begin{algorithm}[htbp]
  \caption{\bP\bP-\bK\bL\ method}\label{alg:PPKL_correction}
  \begin{algorithmic}[1]
    \State \textbf{Input:} $\bX^m$, $\bS^m$, $\bV^m$, $\varepsilon$, $r_{\rm max}$, and flag.
    \State \textbf{Output:} $\bX^{m+1}$, $\bS^{m+1}$, $\bV^{m+1}$, and flag.
    \State Perform steps 1-8 of Alg~\ref{alg:RAMI}, obtain $\widetilde{\bX}^{m+1}$, $\widetilde{\bS}^{m+1}$, and $\widetilde{\bV}^{m+1}$.
    \If{flag is 'K'}
    \State Let $\widetilde{\bK}^{m+1}=\widetilde{\bX}^{m+1}\widetilde{\bS}^{m+1}$, solve \eqref{eq:OP}.
    \If{There exists $\delta\bK^{m+1}$}
    \State Let $\bK^{m+1} = \widetilde{\bK}^{m+1}+\delta\bK^{m+1}$, $[\bX^{m+1},\bS^{m+1}]={\rm QR}(\bK^{m+1})$, and $\bV^{m+1}=\widetilde{\bV}^{m+1}$.
    \Else
    \State Change flag to 'L';
    \State Let $\widetilde{\bL}^{m+1}=\widetilde{\bV}^{m+1}(\widetilde{\bS}^{m+1})^\mathsf{T}$, and obtain $\delta\bL^{m+1}$ by solving \eqref{eq:OP_L};
    \State Let $\bL^{m+1} = \widetilde{\bL}^{m+1}+\delta\bL^{m+1}$, $[\bV^{m+1},(\bS^{m+1})^{\mathsf{T}}]={\rm QR}(\bL^{m+1})$, and $\bX^{m+1}=\widetilde{\bX}^{m+1}$.
    \EndIf  
    \Else
    \State Let $\widetilde{\bL}^{m+1}=\widetilde{\bV}^{m+1}(\widetilde{\bS}^{m+1})^\mathsf{T}$, solve \eqref{eq:OP_L}.
    \If{There exists $\delta\bL^{m+1}$}
    \State Let $\bL^{m+1} = \widetilde{\bL}^{m+1}+\delta\bL^{m+1}$, and $[\bV^{m+1},(\bS^{m+1})^\mathsf{T}]={\rm QR}(\bL^{m+1})$, and $\bX^{m+1}=\widetilde{\bX}^{m+1}$.
    \Else
    \State Change flag to 'K';
    \State Let $\widetilde{\bK}^{m+1}=\widetilde{\bX}^{m+1}\widetilde{\bS}^{m+1}$, and obtain $\delta\bK^{m+1}$ by solving \eqref{eq:OP};
    \State Let $\bK^{m+1} = \widetilde{\bK}^{m+1}+\delta\bK^{m+1}$, $[\bX^{m+1},\bS^{m+1}]={\rm QR}(\bK^{m+1})$, and $\bV^{m+1}=\widetilde{\bV}^{m+1}$.
    \EndIf  
    \EndIf
  \end{algorithmic}
\end{algorithm}
\begin{remark}[Feasibility]
We would like to emphasize that in theorem, it is not guaranteed that the optimization problems \eqref{eq:OP} and \eqref{eq:OP_L} have solutions in the low-rank setting, especially when $r\ll N_x, N_v$. In our numerical experiments, we try to find a solution to one of the optimization problems, e.g. \eqref{eq:OP}, and switch to the other as soon as the problem is not solvable. We have not encountered any failure cases with the strategy in our experiments, but this topic is worth investigating in future works. In particular, increasing the rank of the solution might be necessary to ensure feasibility.
\end{remark}

\section{A conservative and positivity-preserving low-rank method}\label{sec:MMCPPS}
As mentioned in the previous section, incorporating global constraints, such as total mass conservation, into the correction method for $\bK$ and $\bL$ is very costly, as these constraints break the row separability, which is crucial for reducing the computational complexity of the optimization problems. However, if we apply the correction to $\bS$, instead of $\bK$ or $\bL$, the global constraints can be imposed together with the positivity-preserving property, as there is no row separability for $\bS$.

In this section, we introduce a mass and momentum conservative as well as positivity-preserving correction technique on \bS\,  which we will refer to as \bM\bM\bC-\bP\bP-\bS. Let $\bX$, $\widetilde{\bS}$, and $\bV$ be the low-rank solutions after one time step with Algorithm \ref{alg:RAMI}, we do the following correction and obtain $\bS$:
\begin{equation}
 \bS=\widetilde{\bS}+\delta\bS,\notag
\end{equation}
where $\delta\bS\in\polR^{r\times r}$ such that the low-rank solution is positivity-preserving:
\begin{equation}
   \bF=\bX\bS(\bV)^\mathsf{T}=\bX(\widetilde{\bS}+\delta\bS)(\bV)^\mathsf{T}\succeq \pmb{0}_{N_x\times N_v},\notag
\end{equation}
and the above inequality is equivalent to
\begin{equation}\label{eq:S_pp}
  \bX\delta\bS(\bV)^\mathsf{T}\succeq-\bX\widetilde{\bS}(\bV)^\mathsf{T}.
\end{equation}

Let $\calM$ and $\calJ$ denote the total mass and momentum of the system, respectively, the following conservation properties hold for the Vlasov--Poisson equations:
\begin{itemize}
  \item \textbf{mass conservation}: $\frac{\rm d}{{\rm d}t}\calM=\frac{\rm d}{{\rm d}t}\left(\int_\Omega f\ {\rm d}\pmb{x}{\rm d}\pmb{v}\right)=0$;
  \item \textbf{momentum conservation}: $\frac{\rm d}{{\rm d}t}\calJ=\frac{\rm d}{{\rm d}t}\left(\int_\Omega\pmb{v}f\ {\rm d}\pmb{x}{\rm d}\pmb{v}\right)=0$.
\end{itemize}
Assuming we use a nodal basis representation on a uniform phase-space discretization with mesh sizes $\Delta x$ and $\Delta v$, the discrete total mass is defined by 
\begin{equation}
  \calM_h=\sum_{i=1}^{N_x}\sum_{j=1}^{N_v}F_{ij}(\Delta x)^d(\Delta v)^d,\notag
\end{equation}
where $F_{ij}$ is the $(i,j)$-th entry of $\bF$. The discrete total momentum is given by 
\begin{equation}
  \calJ_h=\sum_{i=1}^{N_x}\sum_{j=1}^{N_v}\sum_{k=1}^d F_{ij}(v_k)_j(\Delta x)^d(\Delta v)^d.\notag
\end{equation}
Denote by $\mathbf{v} \in \mathbb{R}^{N_v \times 1}$ the vector of discrete velocity points. We omit the term $(\Delta x)^d(\Delta v)^d$ for simplicity, since it is a constant. The discrete total mass and momentum are then given by
\begin{equation}
  \calM_h = \pmb{1}_{1\times N_x}\bX\bS(\bV)^\mathsf{T}\pmb{1}_{N_v\times 1},\qquad
  \calJ_h = \pmb{1}_{1\times N_x}\bX\bS(\bV)^\mathsf{T}\bv.\notag
\end{equation}
Let $\bS$ preserve the total mass and total momentum of the distribution function, i.e.,
\begin{equation}
  \pmb{1}_{1\times N_x}\bX\bS(\bV)^\mathsf{T}\pmb{1}_{N_v\times 1}=\calM_{h,0},\qquad
  \pmb{1}_{1\times N_x}\bX\bS(\bV)^\mathsf{T}\bv=\calJ_{h,0},\notag
\end{equation}
where $\calM_{h,0}$ and $\calJ_{h,0}$ denote the initial discrete total mass and total momentum, respectively. We add the following additional constraint:
\begin{equation}\label{eq:S_mass}
  \pmb{1}_{1\times N_x}\bX\delta\bS(\bV)^\mathsf{T}\pmb{1}_{N_v\times 1}=\calM_{h,0}-\pmb{1}_{1\times N_x}\bX\widetilde{\bS}(\bV)^\mathsf{T}\pmb{1}_{N_v\times 1},
\end{equation}
and 
\begin{equation}\label{eq:S_momentum}
  \pmb{1}_{1\times N_x}\bX\delta\bS(\bV)^\mathsf{T}\bv=\calJ_{h,0}-\pmb{1}_{1\times N_x}\bX\widetilde{\bS}(\bV)^\mathsf{T}\bv.
\end{equation}

Collecting the constraints \eqref{eq:S_pp}, \eqref{eq:S_mass}, and \eqref{eq:S_momentum}, we obtain the following optimization problem
\begin{equation}\label{eq:S_op}
  \begin{aligned}
    &{\rm minimize}&\qquad&\Vert\delta\bS\Vert_F\\
    &{\rm subject\ to}&\qquad& \bX\delta\bS(\bV)^\mathsf{T}\succeq-\bX\widetilde{\bS}(\bV)^\mathsf{T},\\
    &&& \pmb{1}_{1\times N_x}\bX\delta\bS(\bV)^\mathsf{T}\pmb{1}_{N_v\times 1}=\calM_{h,0}-\pmb{1}_{1\times N_x}\bX\widetilde{\bS}(\bV)^\mathsf{T}\pmb{1}_{N_v\times 1},\\
    &&& \pmb{1}_{1\times N_x}\bX\delta\bS(\bV)^\mathsf{T}\bv=\calJ_{h,0}-\pmb{1}_{1\times N_x}\bX\widetilde{\bS}(\bV)^\mathsf{T}\bv.
  \end{aligned}
\end{equation}
\begin{proposition}
  Let $\rm{vec}(\bA)$ denote the vector obtained by stacking the columns of $\bA$ on top of each other, and the following property holds for the $\rm{vec}$ operator:
  ${\rm vec}(\bA\bB\bC)$ = $(\bC^\mathsf{T}\otimes\bA){\rm vec}(\bB)$, where $\otimes$ denotes the Kronecker product. 
\end{proposition}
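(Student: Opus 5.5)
The identity ${\rm vec}(\bA\bB\bC)=(\bC^\mathsf{T}\otimes\bA){\rm vec}(\bB)$ is a statement about matrix entries, so I will prove it by comparing the two sides column by column. Fix dimensions $\bA\in\polR^{m\times n}$, $\bB\in\polR^{n\times p}$, $\bC\in\polR^{p\times q}$, so that both sides are vectors in $\polR^{mq}$. Write $\pmb{b}_l$ for the $l$-th column of $\bB$, so ${\rm vec}(\bB)=[\pmb{b}_1^\mathsf{T},\ldots,\pmb{b}_p^\mathsf{T}]^\mathsf{T}$. The vectors ${\rm vec}(\bA\bB\bC)$ and $(\bC^\mathsf{T}\otimes\bA){\rm vec}(\bB)$ split naturally into $q$ blocks of length $m$. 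I will show that the $k$-th block of each side is the same vector for every $k=1,\ldots,q$.

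On the left-hand side, the $k$-th block of ${\rm vec}(\bA\bB\bC)$ is the $k$-th column of $\bA\bB\bC$. This column equals $\bA\bB\pmb{c}_k$, where $\pmb{c}_k$ is the $k$-th column of $\bC$. Expanding $\bB\pmb{c}_k=\sum_{l=1}^p C_{lk}\pmb{b}_l$ gives
\[
(\bA\bB\bC)_{:,k}=\sum_{l=1}^p C_{lk}\,\bA\pmb{b}_l .
\]

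On the right-hand side, $\bC^\mathsf{T}\otimes\bA$ is by definition the $q\times p$ block matrix whose $(k,l)$ block is $(\bC^\mathsf{T})_{kl}\bA=C_{lk}\bA$. Multiplying this block matrix against the block vector ${\rm vec}(\bB)$, the $k$-th block of the product is
\[
\sum_{l=1}^p C_{lk}\bA\pmb{b}_l ,
\]
which matches the left-hand side. Since every block agrees, the identity holds.

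There is no real obstacle in this argument. The only point requiring care is the indexing convention: the $(k,l)$ block of the Kronecker product carries the entry $(\bC^\mathsf{T})_{kl}=C_{lk}$, and this is precisely why the transpose of $\bC$ appears in the formula. I would also note the intended use in the paper. Applying the identity to $\bX\delta\bS(\bV)^\mathsf{T}$ gives $(\bV\otimes\bX){\rm vec}(\delta\bS)$. This turns \eqref{eq:S_op} into a standard QP of the form \eqref{eq:QP} in the $r^2$ unknowns ${\rm vec}(\delta\bS)$, with $\|\delta\bS\|_F=\|{\rm vec}(\delta\bS)\|_2$.
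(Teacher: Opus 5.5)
Your proof is correct. Comparing the $k$-th length-$m$ block of each side, both reduce to $\sum_{l=1}^p C_{lk}\bA\pmb{b}_l$, and the fact that the $(k,l)$ block of $\bC^\mathsf{T}\otimes\bA$ is $C_{lk}\bA$ is exactly what makes the transpose appear.

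The paper gives no proof at all. It quotes the identity as a standard property of the vec operator and uses it only to obtain $\bM_1=\bV\otimes\bX$, and likewise $\bM_2$, $\bM_3$ by taking $\bA=\pmb{1}_{1\times N_x}\bX$ and $\bC=\bV^\mathsf{T}\pmb{1}_{N_v\times 1}$ or $\bV^\mathsf{T}\bv$. Your block-column verification supplies the standard argument the paper omits.
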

We can rewrite the optimization problem \eqref{eq:S_op} into the following form:
\begin{equation}\label{eq:QP_S} 
  \begin{aligned}
    &{\rm minimize}&\qquad&\Vert{\rm vec}(\delta\bS)\Vert_2\\
    &{\rm subject\ to}&\qquad& \bM_1{\rm vec}(\delta\bS)\succeq-\bM_1{\rm vec}(\widetilde{\bS}),\\
    &&& \bM_2{\rm vec}(\delta\bS)=\calM_{h,0}-\bM_2{\rm vec}(\widetilde{\bS}),\\
    &&& \bM_3{\rm vec}(\delta\bS)=\calJ_{h,0}-\bM_3{\rm vec}(\widetilde{\bS}).
  \end{aligned}
\end{equation}
where $\bM_1 = (\bV\otimes\bX)$, $\bM_2 = (\pmb{1}_{1\times N_v}\bV)\otimes(\pmb{1}_{1\times N_x}\bX)$, and $\bM_3 = (\bv^\mathsf{T}\bV)\otimes(\pmb{1}_{1\times N_x}\bX)$.

The system \eqref{eq:QP_S} is a QP problem with $r^2$ variables, and its structure is similar to \eqref{eq:SOPi}, except for the inclusion of two additional equality constraints. Since the number of variables is reduced from $N_x r$ (or $N_v r$) to $r^2$, the computational cost remains affordable. Given that we apply the active-set method, the computational complexity for solving \eqref{eq:CFPD} is $O((r^2)^3)=O(r^6)$, which is significantly more efficient than $O(N_x^3r^3)$.

The optimization problem \eqref{eq:QP_S} is generally more likely to be infeasible compared to \eqref{eq:OP} and \eqref{eq:OP_L} because there are additional constraints and the number of variables is reduced. In addition, the switching strategy between $\bK$ and $\bL$ is not applicable here, as the correction is only applied to $\bS$. 
To address this issue, one strategy is to increase the amount of variables whenever we fail to find a solution for \eqref{eq:QP_S}. Nevertheless, we can increase the rank $r$ to enhance the feasibility of \eqref{eq:QP_S}, as the number of variables increases with $r$, and it is trivial within the rank-adaptivity integrator. The workflow of the \bM\bM\bC-\bP\bP-\bS\ method is described in Alg~\ref{alg:MMCPPS_correction}. 

\begin{algorithm}[htbp]
  \caption{\bM\bM\bC-\bP\bP-\bS\ method}\label{alg:MMCPPS_correction}
  \begin{algorithmic}[1]
    \Require $\bX^m$, $\bS^m$, $\bV^m$, $\varepsilon$, and $r_{\rm max}$.
    \Ensure $\bX^{m+1}$, $\bS^{m+1}$, $\bV^{m+1}$.
    \State Perform steps 1-7 of Alg~\ref{alg:RAMI}.
    \While{1}
    \If{$r_1 \leq\min\{r_{\rm max},2r\}$}
    \State Let $\widetilde{\bS}^{m+1}={\rm diag}(\sigma_1,\cdots,\sigma_{r_1})$, $\widetilde{\bX}^{m+1}$ and $\widetilde{\bV}^{m+1}$ be the first $r_1$ columns of $\hat{\bP}$ and $\hat{\bQ}$, respectively.
    \EndIf
    \State Solve \eqref{eq:QP_S} and obtain $\delta\bS^{m+1}$.
    \If{This is no solution for $\delta\bS^{m+1}$}
    \State $r_1 = r_1+1$.
    \Else
    \State Let $\bX^{m+1} = \widetilde{\bX}^{m+1}$, $\bV^{m+1} = \widetilde{\bV}^{m+1}$, $\bS^{m+1} = \widetilde{\bS}^{m+1}+\delta\bS^{m+1}$, and break.
    \EndIf
    \EndWhile
  \end{algorithmic}
\end{algorithm}

\section{Numerical experiments}\label{sec:EXP}
In this section, we present the numerical results of the \bP\bP-\bK\bL\ and \bM\bM\bC-\bP\bP-\bS\ methods to verify the effectiveness of these methods. We employ a toy model based on the 1D1V Vlasov--Poisson equation, using a spectral method for spatial discretization and the classical fourth-order explicit Runge-Kutta method for temporal discretization within the $\bK$, $\bL$ and $\bS$ step, respectively. We verify that both methods can preserve the positivity of the distribution function, and the \bM\bM\bC-\bP\bP-\bS\ method can also preserve the total mass and momentum of the system. In addition, we compare the results of these two methods with those obtained without correction. 

All experiments were performed on a system with an Intel Core i7-13700K processor under Ubuntu. The simulations were implemented using MATLAB R2023b. For all the numerical experiments presented in this section, we use $\varepsilon=1.0^{-6}$ and $r_{\rm max} = 20$.
\subsection{Spatial and temporal discretization} First, we discretize $\Omega_x$ using $N_x$ uniform degrees of freedom and apply a Fourier transform in the $x$-space. We represent $K_j$, $j=1,\ldots,r$, in Fourier space as
\begin{equation}
  K_j(x,t) = \sum_{\alpha=-N_x/2}^{N_x/2} \hat{K}_j(\alpha,t)\exp(\si\alpha x), \notag
\end{equation}
from which it follows that
\begin{equation}
  \partial_x K_j = \sum_{\alpha=-N_x/2}^{N_x/2} (\si\alpha)\hat{K}_j(\alpha)\exp(\si\alpha x). \notag
\end{equation}
The $\mathbf{K}$-step \eqref{eq:Kstep} hence becomes:
\begin{equation}
  \partial_t K_j = -\sum_{l=1}^r\left(c_{jl}^1\sum_{\alpha=-N_x/2}^{N_x/2}(\si\alpha)\hat{K}_l\exp(\si\alpha x)+c_{jl}^2(E_h K_l)\right),\notag
\end{equation}
where $E_h$ is an approximation of the electric field, and the above equation can be replaced by
\begin{equation}
  \partial_t K_j = -\sum_{l=1}^r\left(c_{jl}^1\calF^{-1}_{N_x}(\calF_{N_x}(K_l))+c_{jl}^2E_hK_l\right),\notag
\end{equation}
where $\mathcal{F}_{N_x}$ and $\mathcal{F}_{N_x}^{-1}$ denote the discrete Fourier transform (DFT) over $\Omega_x$ using $N_x$ points and its inverse, respectively. Similarly, we discretize the $\mathbf{L}$-step \eqref{eq:Lstep} as follows:
\begin{equation}
  \partial_t L_i = -\sum_{k=1}^r\left(d_{ik}^1\calF_{N_v}^{-1}(\calF_{N_v}(L_k))+d_{jl}^2vL_k\right).\notag
\end{equation}

For the temporal discretization, we employ the classical fourth-order explicit Runge--Kutta method. For the semi-discrete system $\partial_t \pmb{u} = g(\pmb{u})$, the solution is advanced from $\pmb{u}^m$ to $\pmb{u}^{m+1}$ over a time step $\tau_m$ as follows:
\begin{equation}
    \pmb{k}_1 = g(\pmb{u}^m), \quad
    \pmb{k}_2 = g\!\left(\pmb{u}^m + \tfrac{\tau_m}{2} \pmb{k}_1\right), \quad
    \pmb{k}_3 = g\!\left(\pmb{u}^m + \tfrac{\tau_m}{2} \pmb{k}_2\right), \quad
    \pmb{k}_4 = g\!\left(\pmb{u}^m + \tau_m \pmb{k}_3\right),
\notag
\end{equation}
and the solution is updated by
\begin{equation}
    \pmb{u}^{m+1} = \pmb{u}^m + \frac{\tau_m}{6}\left(\pmb{k}_1 + 2\pmb{k}_2 + 2\pmb{k}_3 + \pmb{k}_4\right).\notag
\end{equation}
The time step is controlled by the following CFL condition
\begin{equation}
  \Delta t = {\rm cfl}\frac{\min(\Delta x, \Delta v)}{\Vert\pmb{\bbetaa}_h\Vert_{\infty},\notag}
\end{equation}
where $\pmb{\bbetaa}_h=(v,E_h)^\mathsf{T}$.  We use ${\rm cfl} = 0.4$ for all the experiments presented in this manuscript.
\subsection{Landau damping}
First, we consider the Landau damping test case, let the phase space be $[0,2\pi/k]\times[-6,6]$, we take the following initial data 
\begin{equation}\label{eq:IN_LD}
 f(x,v,0)=\frac{1}{\sqrt{2\pi}}{\rm exp}\left(-\frac{v^2}{2}\right)(1+\alpha{\rm \cos}(k x)),\notag
\end{equation}
where $\alpha = 0.01$ and $k = 0.5$. When studying the Landau damping using numerical schemes, one common issue is the numerical recurrence, which is a phenomenon where the electric energy recovers to the initial state after a certain time, and the numerical solution becomes unreliable after the recurrence time. This is due to the discretization of the velocity space. 
We compute the electric energy as follows
\begin{equation}
\calE_e = \frac{1}{2}{\int_{\Omega_x}(E_h)^2\ {\rm d}x},\notag
\end{equation} and plot the time evolution of $\calE_e$ in a logarithm scale for the case without correction and for the cases using the \bP\bP-\bK\bL\ and \bM\bM\bC-\bP\bP-\bS\ methods up to the recurrence time, in Fig.~\ref{fig:LD1} (a), together with the expected damping rate. It is observed that the evolution of $\calE_e$ is consistent with the expected damping rate for both cases. Plotting the evolution of the rank $r$ of the solution in Fig.~\ref{fig:LD1} (b), and we observe an initial drop in rank from $r=5$, followed by a increasing to rank $5$, where it subsequently remains constant. In addition, we plot the relative mass error and absolute momentum error for all cases in Fig.~\ref{fig:LD1} (c) and Fig.~\ref{fig:LD1} (d), respectively. In this case, the distribution function remains non-negative throughout the simulation, and the $\bP\bP$-$\bK\bL$ correction adds nothing to the original solution. Consequently, the mass and momentum errors stay the same as those of the uncorrected solution. We see that both the mass and momentum error is significantly reduced and remain negligible after applying the \bM\bM\bC-\bP\bP-\bS\ method.
\begin{figure}[htbp]
  \centering
  \begin{subfigure}[b]{0.45\textwidth}
    \includegraphics[width=\linewidth]{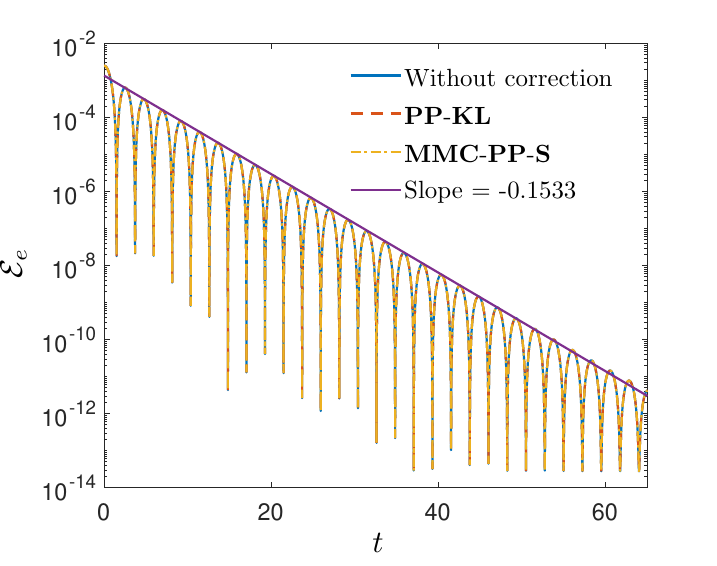}
    \caption{Electric energy}
  \end{subfigure}~
  \begin{subfigure}[b]{0.45\textwidth}
    \includegraphics[width=\linewidth]{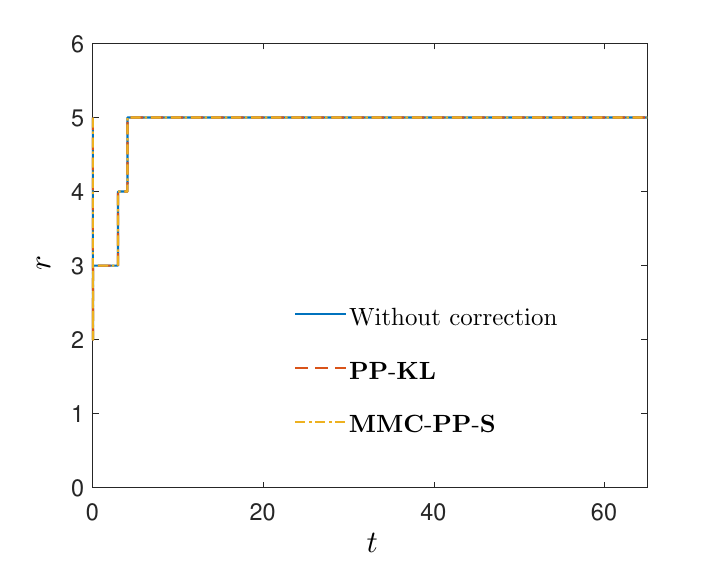}
    \caption{Rank}
  \end{subfigure}
  \begin{subfigure}[b]{0.45\textwidth}
    \includegraphics[width=\linewidth]{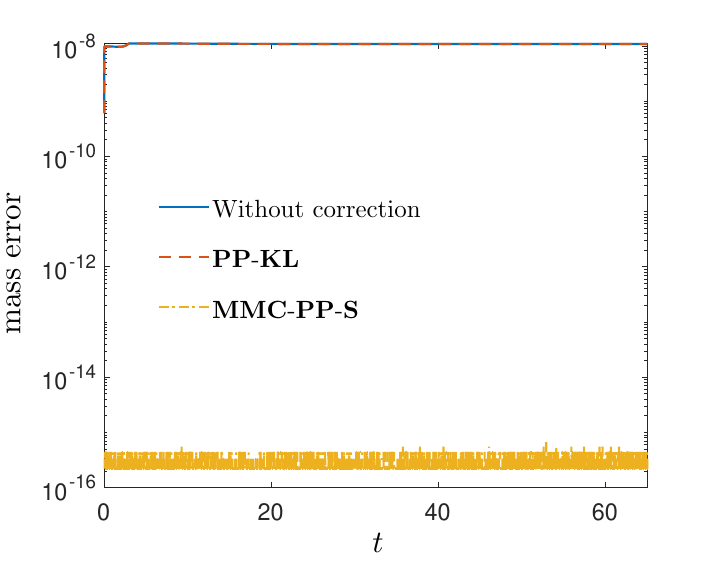}
    \caption{Mass error}
  \end{subfigure}~
  \begin{subfigure}[b]{0.45\textwidth}
    \includegraphics[width=\linewidth]{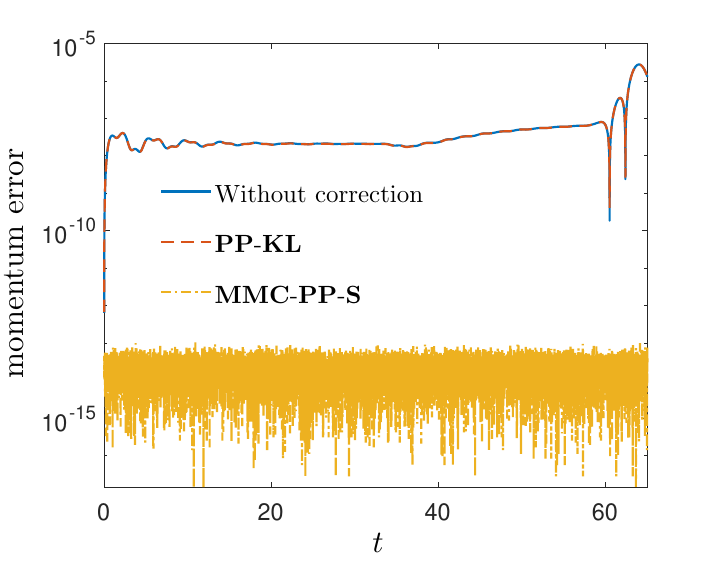}
    \caption{Momentum error}
  \end{subfigure}
  \caption{Landau damping: evolution of the electric energy (a), rank (b), the relative mass error (c), and absolute momentum error (d), respectively. The number of degrees of freedom is $N_x = N_v = 128$.}
  \label{fig:LD1}
\end{figure}

To demonstrate that our scheme is also applicable in the presence of collisions, we consider the BGK model. The Vlasov--Poisson equation with the BGK operator is given by
\begin{equation}
  \partial_t f + v\partial_x f + E\partial_v f = \nu(f_{\rm eq} - f),\notag
\end{equation}
where $\nu$ is the collision frequency, and $f_{\rm eq}$ is the local Maxwellian distribution function defined by
\begin{equation}
  f_{\rm eq}(x,v,t) = \frac{\rho(x,t)}{\sqrt{2\pi T(x,t)}}{\rm exp}\left(-\frac{(v-u(x,t))^2}{2T(x,t)}\right),\notag  
\end{equation}where $\rho(x,t)$, $u(x,t)$, and $T(x,t)$ are the local density, velocity, and temperature, respectively, which are defined by
\begin{equation}
  \rho = \int_\Omega f{\rm d}v,\qquad u = \frac{1}{\rho}\int_\Omega vf{\rm d}v,\qquad T = \frac{1}{\rho}\int_\Omega (v-u)^2f{\rm d}v.\notag
  \end{equation}
The BGK operator is a simplified collision operator that relaxes the distribution function towards a local Maxwellian distribution function, and it is widely used in plasma physics and kinetic theory. We consider the Landau damping with the BGK operator, and set $\nu = 0.01$. We shows plots of the evolution of $\calE_e$, the rank $r$, the relative mass error and absolute momentum error in Fig.~\ref{fig:LD2}. As in the collisionless case, the \bM\bM\bC-\bP\bP-\bS\ method significantly reduces the mass and momentum errors. It is also observed that the presence of the BGK collision operator leads to a slightly larger damping rate than in the collisionless case. This behavior is consistent with the findings in \cite{MR2100514}, where the collisional Fokker--Planck--Landau model is investigated. In particular, the damping rate measured from Fig.~\ref{fig:LD2} (a) is approximately 0.1644. With the same parameters, \cite[Table~2]{MR2100514} presentes a theoretical estimate of 0.154 and a numerical damping rate of 0.168. Our computed value agrees well with the reported numerical result, demonstrating the accuracy of the proposed method for the collisional Vlasov--Poisson--BGK system.
\begin{figure}[htbp]
  \centering
  \begin{subfigure}[b]{0.45\textwidth}
    \includegraphics[width=\linewidth]{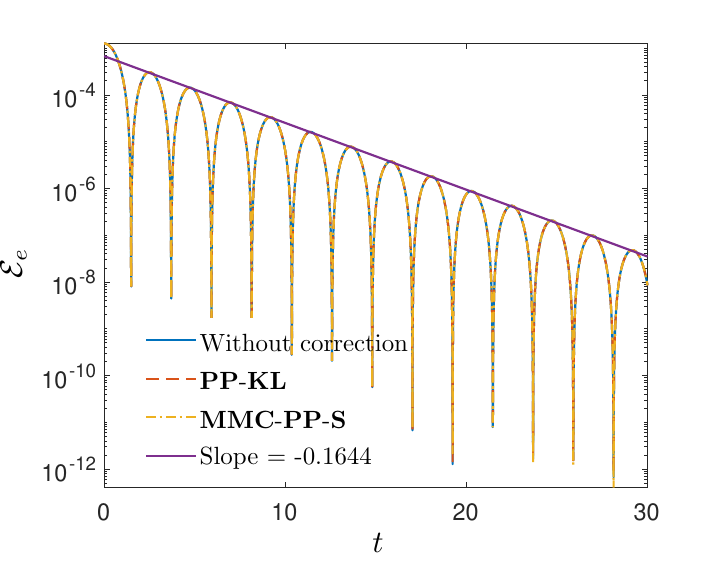}
    \caption{Electric energy}
  \end{subfigure}~
  \begin{subfigure}[b]{0.45\textwidth}
    \includegraphics[width=\linewidth]{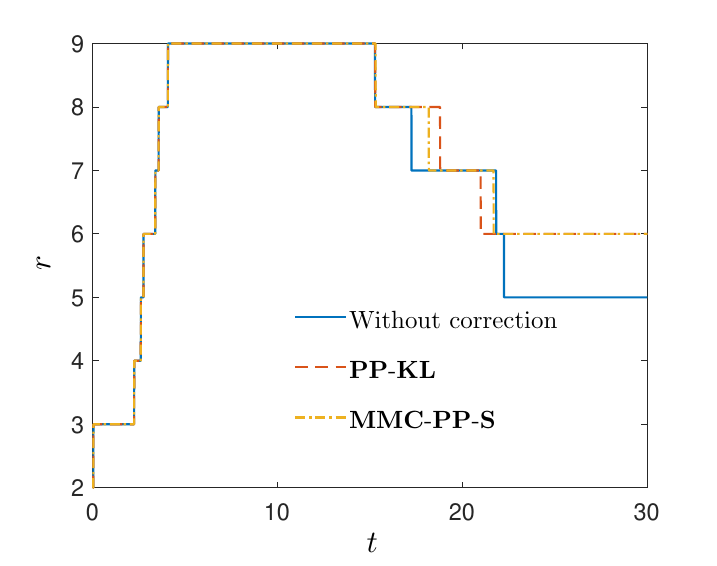}
    \caption{Rank}
  \end{subfigure}
  \begin{subfigure}[b]{0.45\textwidth}
    \includegraphics[width=\linewidth]{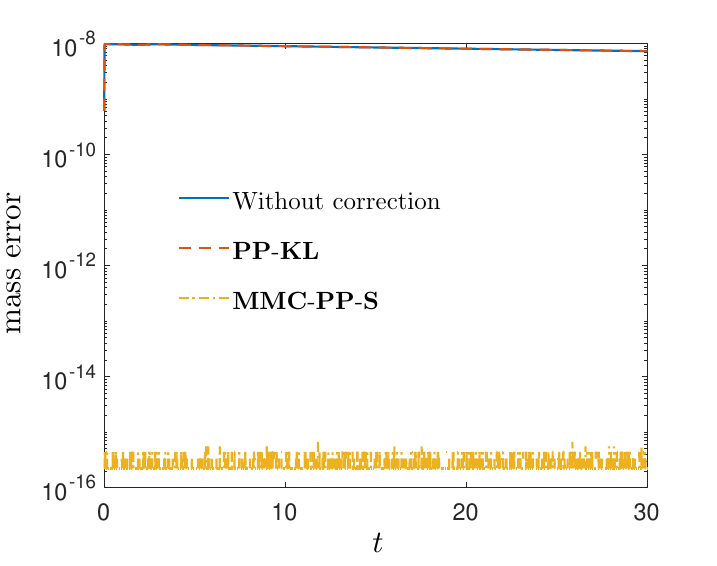}
    \caption{Mass error}
  \end{subfigure}~
  \begin{subfigure}[b]{0.45\textwidth}
    \includegraphics[width=\linewidth]{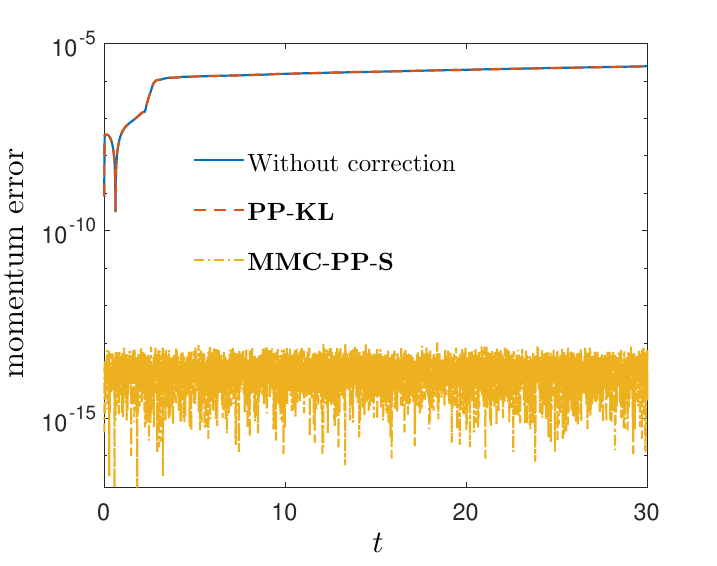}
    \caption{Momentum error}
  \end{subfigure}
  \caption{Landau damping with BGK operator: evolution of the electric energy (a), rank (b), the relative mass error (c), and absolute momentum error (d), respectively. The number of degrees of freedom is $N_x = N_v = 128$.}
  \label{fig:LD2}
\end{figure}
\subsection{Two-stream instability} Next, we consider the two-stream instability, we let the phase space be $[0,2\pi/k]\times[-8,8]$, and use the following initial data
\begin{equation}
 f(x,v,0)=\frac{1}{2\sqrt{2\pi}}{\rm exp}\left(-\frac{(v-2.4)^2}{2}-\frac{(v+2.4)^2}{2}\right)(1+\alpha{\rm \cos}(k x)),\notag
\end{equation}
where $\alpha = 0.001$ and $k = 0.2$. Plot the evolution of $\calE_e$ in Fig.~\ref{fig:TS_fig1} (a), and we see that the evolution of the electric energy is correctly described by the low-rank approximation, in agreement with the results reported in \cite{MR3863075,MR4945433,MR4280248,MR4402737}. Plotting the rank of the distribution function in Fig.~\ref{fig:TS_fig1} (b), we also observe that the rank continues to grow, except for a single drop at the beginning, eventually reaching the prescribed maximum rank and remaining there thereafter.
In addition, we show the relative mass error and absolute momentum error in Fig.~\ref{fig:TS_fig1} (c) and Fig.~\ref{fig:TS_fig1} (d), respectively. It is observed that the \bP\bP-\bK\bL\ method can not preserve the conservation of total mass and momentum, on the contrary the imbalanced is increased compared to the method without correction, while the \bM\bM\bC-\bP\bP-\bS\ method controls both the mass error and momentum error within machine precision.
\begin{figure}[htbp]
  \centering
  \begin{subfigure}[b]{0.45\textwidth}
    \includegraphics[width=\linewidth]{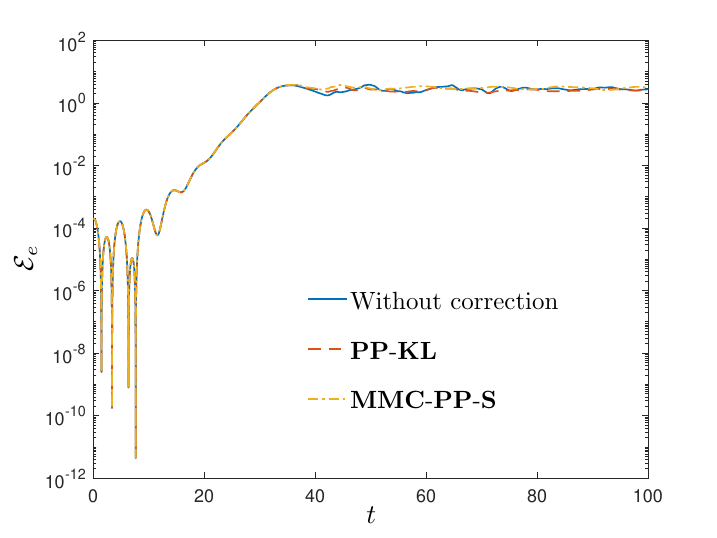}
    \caption{Electric energy}
  \end{subfigure}~
    \begin{subfigure}[b]{0.45\textwidth}
    \includegraphics[width=\linewidth]{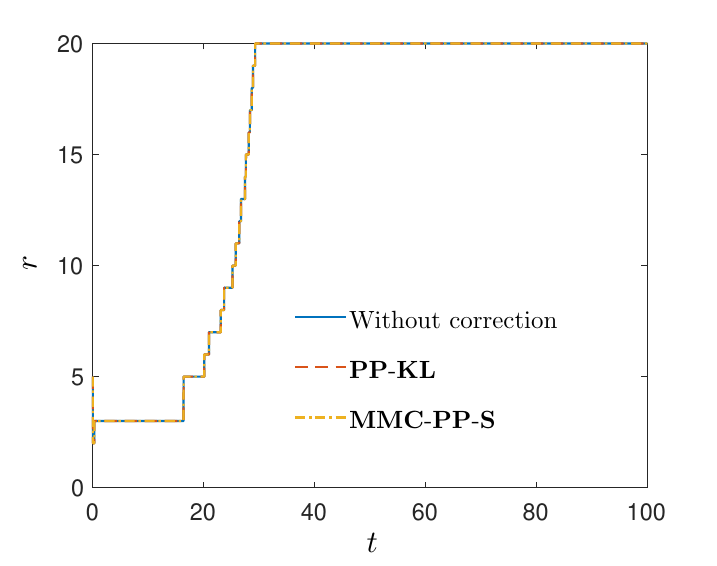}
    \caption{Rank}
  \end{subfigure}
    \begin{subfigure}[b]{0.45\textwidth}
    \includegraphics[width=\linewidth]{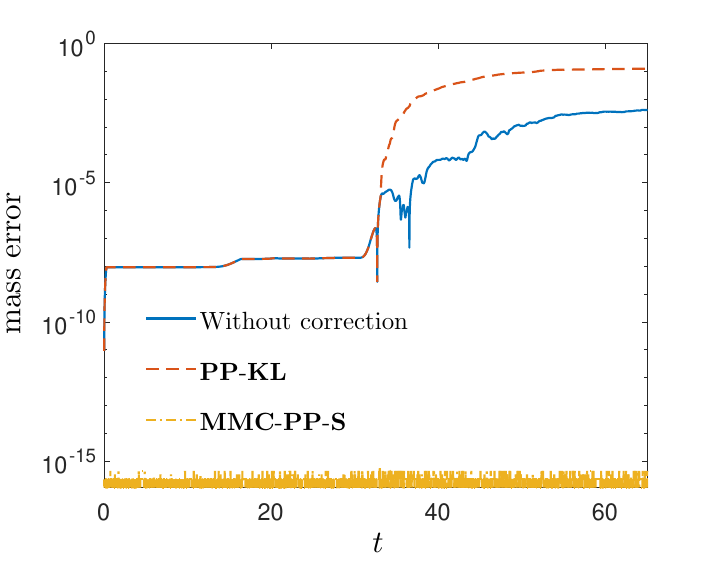}
    \caption{Mass error}
  \end{subfigure}~
    \begin{subfigure}[b]{0.45\textwidth}
    \includegraphics[width=\linewidth]{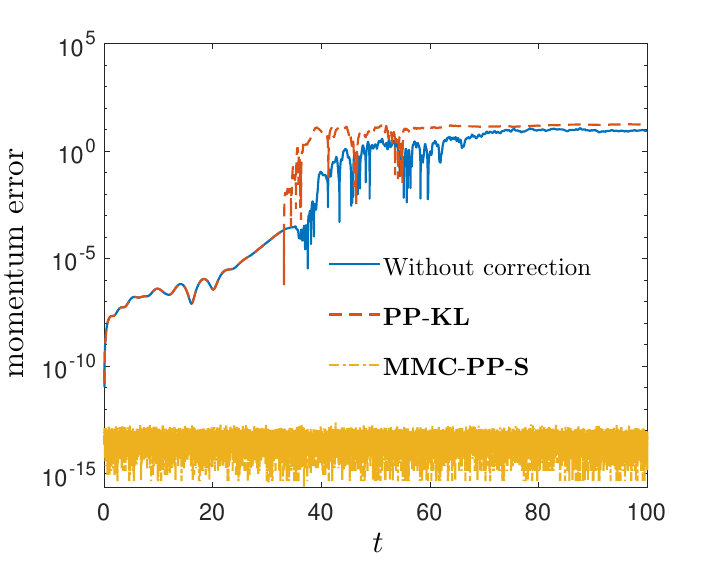}
    \caption{Momentum error}
  \end{subfigure}
   \caption{Two-stream instability: evolution of the electric energy (a), rank (b), the relative mass error (c), and absolute momentum error (d), respectively. The number of degrees of freedom is $N_x = N_v = 128$.}
   \label{fig:TS_fig1}
\end{figure}

For the purpose of verifying the positivity-preserving property, we plot the lower bound of the distribution function in Fig.~\ref{fig:TS_fig2}, and it is observed that the minimal value of the distribution function becomes negative at round $t=30$ without correction, while it remains positive when applying both methods, which implies that the positivity is preserved in both methods. We also show the distribution functions at $t=35$ in Fig~\ref{fig:TS_fig3}, for the case without correction and the cases with the \bP\bP-\bK\bL\ and \bM\bM\bC-\bP\bP-\bS\ methods applied. The results in Fig.~\ref{fig:TS_fig3} are close, despite the fact that the distribution function introduces negative values in Fig.~\ref{fig:TS_fig3} (a) without corrections, while it remains all positive in Fig.~\ref{fig:TS_fig3} (b) and (c) when the \bP\bP-\bK\bL\ or \bM\bM\bC-\bP\bP-\bS\ method applied. 
\begin{figure}[htbp]
  \centering
  \begin{subfigure}[b]{0.45\textwidth}
    \includegraphics[width=\linewidth]{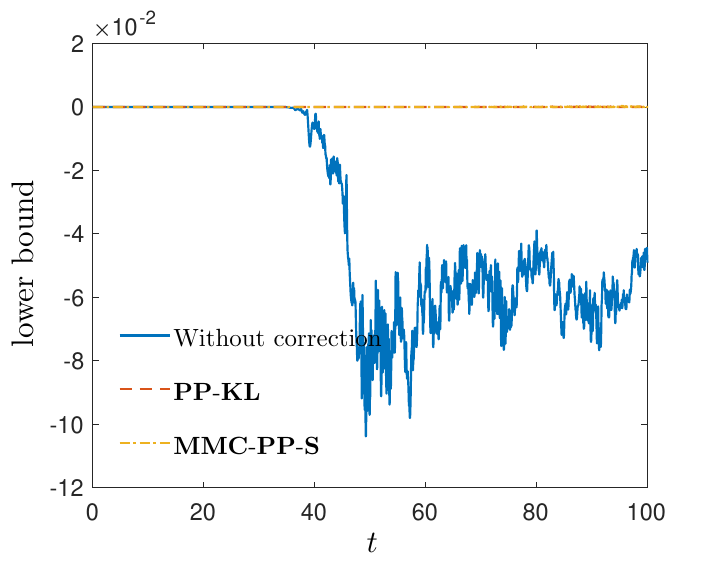}
  \end{subfigure}
   \caption{Two-stream instability: evolution of the lower bound of the distribution function. The number of degrees of freedom is $N_x = N_v = 128$.}
   \label{fig:TS_fig2}
\end{figure}
\begin{figure}[htbp]
  \centering
  \begin{subfigure}[b]{0.45\textwidth}
    \includegraphics[width=\linewidth]{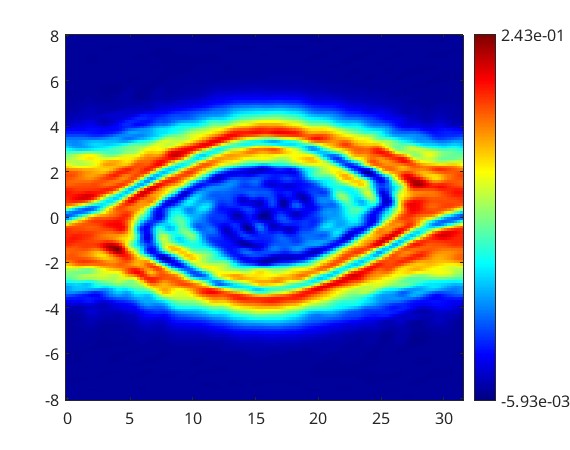}
    \caption{Without correction}
  \end{subfigure}~
  \begin{subfigure}[b]{0.45\textwidth}
    \includegraphics[width=\linewidth]{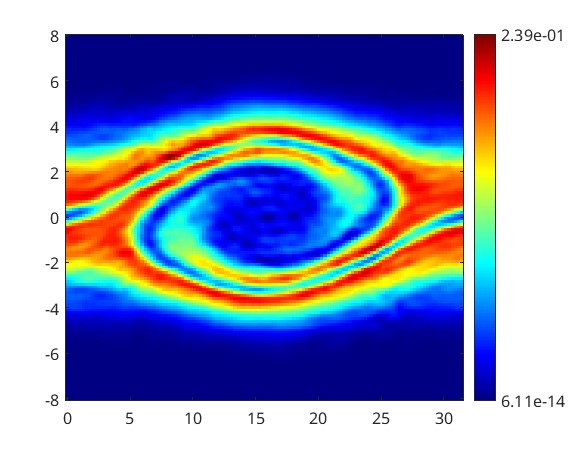}
    \caption{\bP\bP-\bK\bL\ method}
  \end{subfigure}
  \begin{subfigure}[b]{0.45\textwidth}
    \includegraphics[width=\linewidth]{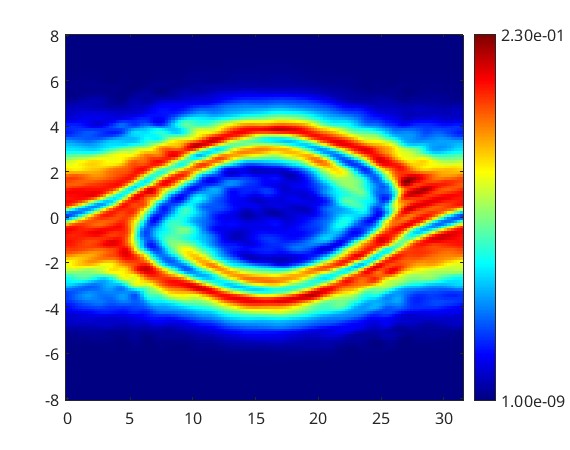}
    \caption{\bM\bM\bS-\bP\bP-\bS\ method}
  \end{subfigure}
  \begin{subfigure}[b]{0.45\textwidth}
    \phantom{\includegraphics[width=\linewidth]{S_T40.jpg}}
  \end{subfigure}
  \caption{Two-stream instability: distribution functions at $t=40$. The color bar indicates the minimum and maximum values of the distribution function. The number of degrees of freedom is $N_x = N_v = 128$.}
  \label{fig:TS_fig3}
\end{figure}

In addition, we show $\Vert\delta\bF\Vert_F$ in Fig.~\ref{fig:TS_fig4} to illustrate the magnitude of the corrections required by both methods. Since the \bP\bP-\bK\bL\ method only targets positivity preservation, Fig.~\ref{fig:TS_fig4} (a) shows that $\Vert\delta\bF\Vert_F$ stays zero until approximately $t=35$, as no correction is needed while the distribution function remains non-negative. Furthermore, $\Vert\delta\bF\Vert_F$ reaches its peak around the time when the minimum value of the distribution function attains its lowest level in the uncorrected case, as illustrated in Fig.~\ref{fig:TS_fig2}. For the \bM\bM\bC-\bP\bP-\bS\ method, the situation is more complicated, as the correction is also required for mass and momentum conservation. In general, we observe that $\Vert\delta\bF\Vert_F$ tends to increase, reaching its maximum around $t=45$, before starting to decrease. Finally, we conclude that the correction needed is rather similar in both cases with the \bP\bP-\bK\bL\ method reaching a larger peak in a few isolated iterations.
\begin{figure}[htbp]
  \centering
    \begin{subfigure}[b]{0.45\textwidth}
    \includegraphics[width=\linewidth]{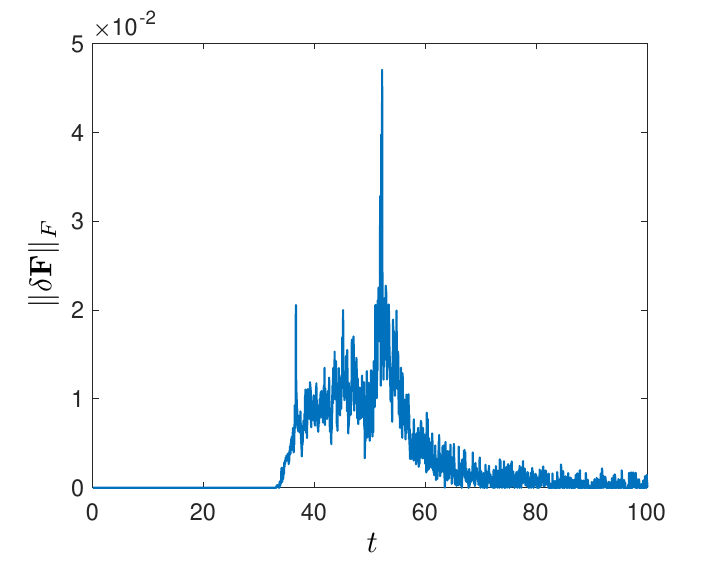}
    \caption{$\bP\bP$-$\bK\bL$ method}
  \end{subfigure}~
    \begin{subfigure}[b]{0.45\textwidth}
    \includegraphics[width=\linewidth]{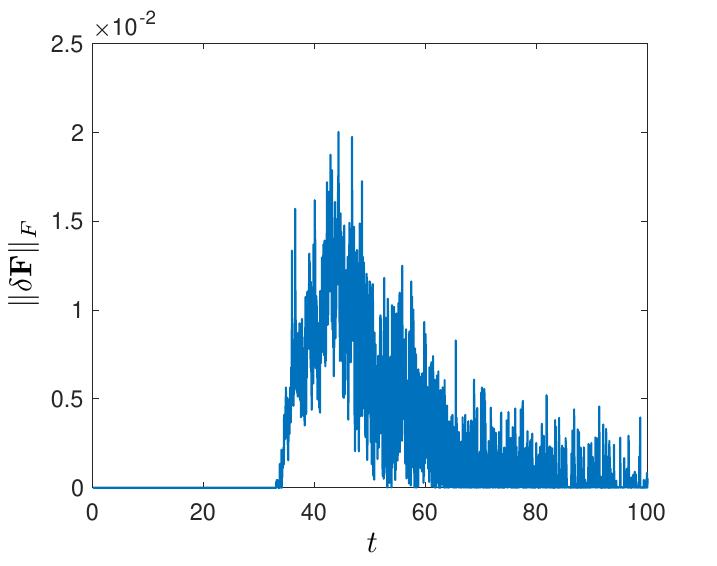}
    \caption{$\bM\bM\bC$-$\bP\bP$-$\bS$ method}
  \end{subfigure}
   \caption{Two-stream instability: the $\Vert\cdot\Vert_F$ of the correction term $\delta\bF$ for \bP\bP-\bK\bL\ and \bM\bM\bC-\bP\bP-\bS\ methods, respectively. The number of degrees of freedom is $N_x = N_v = 128$.}
   \label{fig:TS_fig4}
\end{figure}

\subsection{Computational performance} We have illustrated the effectiveness of the proposed methods in terms of positivity preservation, mass and momentum conservation. Next, we assess their computational performance. In Tab.~\ref{tab:running_times}, we report the run times of all methods for the test cases presented earlier. Since we used $N_x=N_v$, we denote both by $N$ for the remainder of the discussion for simplicity.

Let us break down and analyse the computational cost step by step.  In our workflow, we first check whether the uncorrected solution satisfies the constraints. Therefore, we observe that even when the $\mathbf{PP}$-$\mathbf{KL}$ correction is not activated in the case of Landau damping, there is still a runtime overhead introduced by the $\mathbf{PP}$-$\mathbf{KL}$ method compared to the case without correction.

We also observe a significant increase in runtime when using the \bM\bM\bC-\bP\bP-\bS\ method for the two-stream instability. In particular, the additional runtime introduced by the \bM\bM\bC-\bP\bP-\bS\ method is roughly 50 times larger than that of the \bP\bP-\bK\bL\ method. The computational complexity consideration of the BUG integrator is mostly dominated by the calculation of the QR decompositions with a computational complexity of $O(Nr^2)$.
As mentioned earlier, the complexity of solving each subproblem of \eqref{eq:OP} or \eqref{eq:OP_L} is $O(r^3)$, since each subproblem involves $r$ variables. Hence, the total complexity is $O(Nr^3)$ the \bP\bP-\bK\bL\ method, as there are $N$ subproblems in total. 
On the other hand, for the \bM\bM\bC-\bP\bP-\bS\ method, the complexity of solving \eqref{eq:S_op} is $O(r^6)$. Moreover, Fig~\ref{fig:TS_fig1} (b) shows that $r$ reaches the maximal rank of 20 at a early stage. Therefore, comparing the two complexities gives: $r^6/(Nr^3)=8000/128=62.5$, which explains the substantially larger computational overhead of the \bM\bM\bC-\bP\bP-\bS\ method compared to the \bP\bP-\bK\bL\ method. However, we have to keep in mind that the ratio depends on the ration of the rank $r$ compared to the resolution so that the comparison may change in higher dimensions.
\begin{table}[htbp]
  \caption{Run time (in seconds) for all the experiments presented in this section.}
  \label{tab:running_times}
  \small
  \begin{center}
    \begin{tabular}{c|c|c|c} \hline
      &Without correction&\bP\bP-\bK\bL\ method&\bM\bM\bC-\bP\bP-\bS\ method\\ \hline
      LD&$53.43$&$65.35$&$136.41$\\ \hline
      LD-BGK&$37.45$&$44.73$&$72.06$\\ \hline
      TS&$90.28$&$122.91$&$1861.30$\\ \hline
    \end{tabular}
  \end{center}
\end{table}

\section{Conclusion and outlook}\label{sec:CON} 
In this article, we introduce a novel approach that adds correction terms to low-rank approximations of the Vlasov--Poisson equation such that the corrected solution preserves key properties, including positivity and the conservation of total mass and momentum. The correction terms are determined by solving quadratic programming problems using the active-set method. The proposed methods are designed for general dynamical low-rank schemes under various discretizations, and we demonstrate them using a low-rank framework with spectral methods in space and an explicit Runge-Kutta scheme in time as a showcase. Experiments verify that both the \bP\bP-\bK\bL\ and \bM\bM\bC-\bP\bP-\bS\ methods are positivity-preserving, and that the latter also preserves the conservation of total mass and momentum. The methods introduced in this work can be further exteneded for broader applications. 

There are several limitations of this work that warrant further investigation in future research. First, the purpose of this work is to introduce the proposed correction methods and verify whether positivity preservation and conservation properties are satisfied. The methods are implemented in a straightforward way: the resulting optimization problems are solved using an active-set method implemented in serial MATLAB code. Experiments verify the effectiveness of the proposed methods, but also reveal that they introduce additional computational cost in terms of increased runtime. Future work could focus on improving the computational efficiency of the proposed methods, including through parallelization. Second, the proposed methods are based on the solution of optimization problems, and we employ strategies such as switching between different correction procedures and increasing the rank, to ensure feasibility. However, a rigorous theoretical proof of feasibility and the existence of solutions is still lacking. In general, we do not observe infeasibility with the \bP\bP-\bK\bL\ method, whereas infeasibility may occur for the \bM\bM\bC-\bP\bP-\bS\ method, especially when \(r\) is too small compared to $N_x$ and $N_v$. Finally, this work focuses only on properties that are linear and can be easily formulated as optimization constraints, while ignoring more nontrivial ones. For instance, energy conservation is not considered in this work, since the total energy includes the electric energy, which is not straightforward to incorporate into the optimization constraints. Future work could consider combining our methods with more advanced optimization algorithms, such as the Alternating Direction Method of Multipliers (ADMM).

\section*{Acknowledgement}
We would like to thank Prashant Singh from Uppsala University for valuable discussions.

\bibliographystyle{elsarticle-num}  

\bibliography{ref}
\end{document}